\numberwithin{equation}{section}
\DeclareMathAlphabet{\mathscrbf}{OMS}{mdugm}{b}{n}
\DeclareMathOperator{\Id}{Id}
\DeclareMathOperator{\tr}{tr}
\DeclareMathOperator{\Ric}{Ric}
\DeclareMathOperator{\vol}{vol}
\DeclareMathOperator{\dd}{d}
\DeclareMathOperator{\Div}{div}
\newcommand{\bb}{\mathbb}
\newcommand{\cal}{\mathcal}
\DeclareMathOperator{\G}{\mathsf{G}}
\DeclareMathOperator{\Scal}{Scal}
\DeclareMathOperator{\Ed}{End}
\theoremstyle{plain}
\newtheorem{lemma}{Lemma} [section]
\newtheorem{theorem}[lemma]{Theorem}
\newtheorem{corol}[lemma] {Corollary}
\theoremstyle{definition}
\newtheorem{definition}[lemma] {Definition}
\newtheorem{remark}[lemma] {Remark}
\newtheorem*{remark*}{Remark}
\definecolor{dark}{rgb}{0.18,0.18,0.68}
\definecolor{mydark}{rgb}{0.78,0.08,0.08}
\definecolor{crew}{rgb}{0.2,0.5,0.2}
\definecolor{mmg}{rgb}{0.31,0.50,0.23}
\definecolor{dblue}{rgb}{0.01,0.01,0.44}
\definecolor{red}{rgb}{0.57,0.11,0.15}
\definecolor{cobalt}{RGB}{10,19,111}
\begin{document}  
 
% \pagestyle{fancy}

%\ShortTitle{Invariant connections and$\nabla$-Einstein structures }
\title{A note on the volume of  $\nabla$-Einstein manifolds with skew-torsion}
%\shorttitle{This is my short title}
\author{Ioannis Chrysikos}
 \address{University of Hradec Kr\'alov\'e, Faculty of Science, Rokitansk\'eho 62, 500 03 Hradec Kr\'alov\'e, Czech Republic}
 \email{ioannis.chrysikos@uhk.cz}
 
 %\email{chrysikosi@math.muni.cz}
   % \headers{\ShortTitle}{\TheAuthors}

 \begin{abstract}
We study the volume of compact Riemannian manifolds which are Einstein with respect to a metric connection with (parallel) skew-torsion. We provide  a result for the sign of the first variation of the volume in terms of the corresponding scalar curvature. This generalizes a result of M.~Ville \cite{Vil} related with the first variation of the volume on a compact Einstein manifold.  
 \end{abstract}
% \titlerunning{short title}
\maketitle  
%   \tableofcontents

\section*{Introduction}

Consider a compact Einstein manifold $(M^{n}, g)$   and let $\cal{M}$ be the space of  Riemannian metrics on $M$.  A smooth variation of the metric $g$ in the direction of some arbitrary  but fixed smooth symmetric $(2, 0)$-tensor $h\in\Gamma(S^{2}T^{*}M)=:\cal{S}^{2}(M)$ on $M$, is a  smooth curve  $g(t) : (-\epsilon, \epsilon)\to \cal{M}$ 
with $g(0)=g$ and $\dot{g}(0)=\frac{d}{dt}\big|_{t=0}g(t)=h$. We view the scalar curvature  related with the Levi-Civita connection $\nabla^{g}$ as a functional $\cal{M}\to C^{\infty}(M)$, defined by $g\mapsto \Scal_{g}$, 
and in a line with the notation of \cite{Bes}, we denote its differential $\Scal_{g}'$ at $g$  in the direction of $h$ (or the first variation in the direction of $h$) by
\[
\Scal_{g}'(h):=\frac{d}{dt}\Big|_{t=0}\Scal_{g+th}=\frac{d}{dt}\Big|_{t=0}\Scal_{g(t)}\,.
\]
  By Ville \cite{Vil} it is known that the first variation of  the volume $\vol_{g}(M)$ of $(M^n, g)$ enjoys some nice properties which can be expressed in terms of the  scalar curvature. In particular,   
  \[
  \int_{M}(\Scal_{g}'h )dV_{g}=-\frac{2}{n}\Scal_{g}\vol_{g}(M)'h \,,
  \]
where $dV_{g}$ is the volume element,  which  can be also written as
  \[
  \int_{M}\frac{d}{dt}\Big|_{t=0}\Scal_{g(t)}dV_{g}=-\frac{2}{n}\Scal_{g}\frac{d}{dt}\Big|_{t=0}\vol_{g(t)}(M)\,.
  \] 
Here,   $\Scal_{g(t)}$ and $\vol_{g(t)}(M)$ denote the scalar curvature and volume of $g(t)$, respectively. Therefore, if $\Scal_{g}=0$, or if  $\vol_{g}(M)'h=0$, then the differential $\Scal_{g}'$ cannot have constant sign, unless it is identically zero (see also \cite[Prop.~1.188]{Bes}). 

\smallskip
Our purpose in this short note  is to extend Ville's results on  Riemannian manifolds which are ``Einstein'' with respect to a metric connection $\nabla$ with skew-torsion $T\in\Omega^{3}(M)$, i.e.
\[
\Ric^{\nabla}_{S}=\frac{\Scal^{\nabla}}{n}g\,, 
\]
where $\Ric^{\nabla}_{S}$ denotes the symmetric part of the Ricci tensor $\Ric^{\nabla}$ associated to $\nabla$ (see below).  Such geometric structures are   called {\it $\nabla$-Einstein manifolds with skew-torsion} and  play a key role in the theory of  non-integrable geometries, due to the so-called {\it characteristic connection} (cf. \cite{FrIv}). This is a metric connection $\nabla$ with skew-torsion as above, i.e. 
\[
\nabla=\nabla^{g}+\frac{1}{2}T
\]
which preserves  the underlying non-integrable geometry and hence it is a natural replacement of the Levi-Civita connection. % (see for example the survey  \cite{Agr}).
A first systematic study of $\nabla$-Einstein manifolds with skew-torsion, in a variety of different dimensions, was given in \cite{AFer}.   It is remarkable that this work  also provides   the existence of  $\nabla$-Einstein manifolds with skew-torsion, which are {\it not} Einstein with respect to the corresponding metric, e.g.  the Allof-Wallach spaces.  Further such examples were constructed   on Berger spheres in \cite{Draper2}, while similar existence results are known even for non-Einstein Lorentzian metrics.   Notice also that  there are  $\nabla$-Einstein structures with skew-torsion which provide  examples of manifolds  satisfying  important   spinorial equations,  a fact which yields a strong interplay with   string theory  (see e.g. \cite{FrIv,  ABK, Chrys1, Chrys2}).    Hence,  the last decades the geometry  of  $\nabla$-Einstein manifolds with skew-torsion  has  attracted more attention.  For instance, the recent works \cite{Chrys, Draper, Draper2, Chrys3}  present  classification results of  such structures for particular families of homogeneous spaces  (in terms of invariant connections and representation theory). 

%In fact,   results in this direction are given  in terms of   invariant connections, whose classification relies on  representation theory and becomes more tractable (see \cite{Chrys, Draper, Draper2, Chrys3}). %Further   different approaches are based for instance on the theory of special spinors   (cf. \cite{AF2, AFer,  Chrys1}). % and the references therein. has been succeeded

%\smallskip
%We  point out that although the condition $(\al)$  does not imply the original Einstein condition, most known   (strictly) $\nabla$-Einstein manifolds which is not   Einstein. 

\smallskip
At this point  we need to emphasize an important difference in comparison with the classical notion of Einstein manifolds, namely: {\it   $\nabla$-Einstein manifolds with skew-torsion $(M^{n}, g, T)$ may have non-constant scalar curvature $\Scal^{\nabla}$} (\cite{AFer}).  This  comes true since for example in this case the corresponding   {\it Einstein tensor} 
\[
G^{\nabla}:=-\Ric_{S}^{\nabla}+\frac{1}{2}\Scal^{\nabla} g
\]
  is not necessarily divergence free.
In this short note by assuming that the scalar curvature $\Scal^{\nabla}$ is constant, we provide an  extension of  Ville's result for compact  $\nabla$-Einstein manifolds with skew-torsion.  Examples of  $\nabla$-Einstein manifolds  which verify our unique assumption   are those whose torsion form $T$ is $\nabla$-parallel.  Consequently,  one can present  a wealth of examples for which our result makes sense, e.g. 6-dimensional nearly K\"ahler manifols, 7-dimensional weak $\G_2$-manifolds, 7-dimensional 3-Sasakian manifolds and naturally reductive spaces are few of them (see for example \cite{AFer, ABK, Chrys1, Chrys2} and the references therein). 

%\smallskip
%The structure of this note is  simple;  In the first section we review some results of connections with skew-torsion and $\nabla$-Einstein manifolds. Next in the second section we present the extension of  Ville's result and some corollaries.

\section{$\nabla$-Einstein manifolds with skew-torsion}
 Let  $(M^{n}, g)$ be a connected oriented Riemannian manifold. We fix once and for all  a  $g$-orthonormal basis $\{e_{1}, \ldots, e_{n}\}$  of $T_{x}M$ at some point $x\in M$.  
 Recall that a linear connection 
 \[
 \nabla : \Gamma(TM)\mapsto\Gamma(T^{*}M\otimes TM)
 \]
   is said to be metric if $\nabla g=0$. % which is equivalent to
%\[
%Xg(Y, Z)=g(\nabla_{X}Y, Z)+g(Y, \nabla_{X}Z)\,
%\]
%|for any $X, Y, Z\in\Gamma(TM)$. 
The torsion $T : \Gamma(TM)\times\Gamma(TM)\to\Gamma(TM)$ of $\nabla$ is the vector-valued 2-form 
  defined by $T(X, Y):=\nabla_{X}Y-\nabla_{Y}X-[X, Y]$, for any  vector  field $X, Y\in\Gamma(TM)$. $\nabla$ is said to be with  totally anti-symmetric torsion if the induced tensor $T(X, Y, Z):=g(T(X, Y), Z)$ is a 3-form on $M$ and then the following identity holds %we can write
\[
g(\nabla_{X}Y, Z)=g(\nabla^{g}_{X}Y, Z)+\frac{1}{2}T(X, Y, Z) \,.
\]
Thus, in such a case  the  condition $T\in\Omega^{3}(M)$   induces  the dimensional restriction  $n=\dim_{\bb{R}}M\geq 3$. 
 
Let us  denote by 
 \[
\dd_{\nabla} : \Omega^{p}(M)\to\Omega^{p+1}(M)\,,\quad \dd_{\nabla}^* : \Omega^{p}(M)\to\Omega^{p-1}(M)\,,
\]
the differential and co-differential induced by $\nabla$, which are the differential operators defined by $\dd_{\nabla}\omega:=\sum_{i}e_{i}\wedge \nabla_{e_{i}}\omega$ and  $\dd_{\nabla}^{*}\omega:=-\sum_{i}e_{i}\lrcorner  \nabla_{e_{i}}\omega$, respectively. 
 In a line with the  Riemannian case, $\dd_{\nabla}$ and $\dd_{\nabla}^{*}$ are formally adjoint each other, but in general one has  $(\dd_{\nabla})^{2}\neq 0$. %although the action of $\dd_{\nabla}$ and $\dd\equiv \dd_{g}$   on smooth functions coincides.  
 The difference between $\dd_{\nabla}T$ and $\dd T$, where we set $\dd\equiv \dd_{g}$, is a 4-form which (up to a factor) will be denoted by $\sigma_{T}$, namely $\dd_{\nabla}T-\dd T=-2\sigma_{T}$. This 4-form $\sigma_{T}$ does not play some explicit role  in this note and we refer   to \cite{FrIv} for alternative definitions. Notice however that  the co-differential of the torsion form $T$ satisfies  $\dd_{\nabla}^{*}T=\dd^{*} T$ and under the assumption $\nabla T=0$ the following hold (see  \cite{AF, FrIv})
    \[
   \dd_{\nabla}^{*}T=0=\dd^{*} T \, , \quad \dd T=2\sigma_{T} \, .
    \]
  For the curvature tensor $R^{\nabla}$ associated to $\nabla$, we adopt the convention  
  \[
  R^{\nabla}(X, Y)Z:=\nabla_{X}\nabla_{Y}Z-\nabla_{Y}\nabla_{X}Z-\nabla_{[X, Y]}Z\,,
  \]
and set $R^{\nabla}(X, Y, Z, W):=g(R^{\nabla}(X, Y)Z, W)$.   In terms of the $g$-orthonormal frame $\{e_1, \ldots, e_n\}$, the Ricci tensor associated to $\nabla$ is given by %defined by
% \[
%\Ric^{\nabla}(X, Y):={\rm tr}(Z\mapsto R^{\nabla}(X, Z)Y),
%\]
%  or  in terms of the $g$-orthonormal frame $\{e_1, \ldots, e_n\}$ by 
\[
 \Ric^{\nabla}(X, Y)=\sum_{i}g(R^{\nabla}(X, e_i)e_i, Y)=\sum_{i}R^{\nabla}(X, e_i, e_i, Y)\,.
 \]
Moreover, the two Ricci tensors are related by (see e.g. \cite{FrIv})
\[
 \Ric^{\nabla}(X, Y)=\Ric^{g}(X, Y)-\frac{1}{4}S(X, Y)-\frac{1}{2}(\dd^{*}T)(X, Y)\,,
\]
  where $S$ is the symmetric tensor  defined by
  \begin{equation}\label{S}
S(X, Y):=\sum_{i=1}^{n}g(T(e_{i}, X), T(e_{i}, Y))=\sum_{i, j=1}^{n}T(e_{i}, X, e_{j})\cdot T(e_{i}, Y, e_{j}) \,.
\end{equation}
Thus, in contrast to the Riemannian Ricci tensor $\Ric^{g}$, the Ricci tensor of $\nabla$ is not in general symmetric; it decomposes into a symmetric and anti-symmetric part $\Ric^{\nabla}=\Ric_{S}^{\nabla}+\Ric_{A}^{\nabla}$, given by 
\[
\Ric_{S}^{\nabla}(X, Y):=\Ric^{g}(X, Y)-\frac{1}{4}S(X, Y)\, ,\quad \Ric_{A}^{\nabla}(X, Y):=-\frac{1}{2}(\dd^{*}T)(X, Y) \, ,
\]
respectively. As we said above, when $T$ is $\nabla$-parallel, then $\dd^{*}T=0$ and hence $\Ric^{\nabla}=\Ric^{\nabla}_{S}$. Finally, the scalar curvature $\Scal^{\nabla}=\tr\Ric^{\nabla}$ of $(M, g, T)$ with respect to $\nabla$ satisfies the identity     $\Scal^{\nabla}=\Scal^{g}-\frac{3}{2}\|T\|^{2}_{g}$\,.

\begin{definition} 
A triple $(M^{n}, g, T)$ $(n\geq 3)$ is called a {\it $\nabla$-Einstein manifold with   skew-torsion} $0\neq T\in\Omega^{3}(TM)$, or in short, a {\it $\nabla$-Einstein manifold}, if the symmetric part $\Ric_{S}^{\nabla}$ of the Ricci tensor associated to the metric connection $\nabla=\nabla^{g}+\frac{1}{2}T$ satisfies the equation 
\begin{equation}\label{skein}
\Ric_{S}^{\nabla}=\frac{\Scal^{\nabla}}{n}g \, ,
\end{equation}
where $\Scal^{\nabla}$ is the scalar curvature associated to $\nabla$ and $n=\dim_{\bb{R}}M$. If $\nabla T=0$, then $(M, g, T)$ is called a {\it $\nabla$-Einstein manifold with parallel skew-torsion}. 
\end{definition}
In a line  with the case of  Einstein manifolds, in  \cite{AFer} it was shown that  $\nabla$-Einstein manifolds $(M, g, T)$   attain   a variational approach which reads in terms of a {\it generalized scalar curvature functional} $\cal{L}$, given by
\[
  (g, T)\mapsto \int_{M}\Big(\Scal^{\nabla}-2\Lambda\Big)\dd V_{g}=\int_{M}\Big(\Scal^{g}-\frac{3}{2}\|T\|^{2}_{g}-2\Lambda\Big)dV_{g}.
\]
In particular, if  $\Lambda=0$, then the  critical points of $\cal{L}$ are  triples $(M, g, T)$ satisfying $\Ric_{S}^{\nabla}=0$ identically, and conversely. If $\Lambda=0$ and $\nabla T=0$, then the critical points are $\Ric^{\nabla}$-flat manifolds. For $\Lambda\neq 0$,  critical points of $\cal{L}$ are    $\nabla$-Einstein manifolds $(M, g, T)$ with $\Scal^{\nabla}\neq 0$, i.e. $\Ric_{S}=\frac{\Scal^{\nabla}}{n}g$ (so-called {\it strictly $\nabla$-Einstein metrics}), and conversely.

\section{An extension of a result of Ville}
Let us now  prove the analogue of the infinitensimal result of  Ville \cite[Prop.~3]{Vil} about the sign of the first variation of the volume on compact Einstein manifolds (see also Proposition 1.188 in Besse's book \cite{Bes}). The volume of a triple $(M^{n}, g, T)$ will be denoted by ${\rm Vol}_{g}(M):=\int_{M}dV_{g}$.  Also, given    $h_1, h_2\in \cal{S}^{2}(M)$, we shall denote  their inner product with respect to $g$ by
   \[
(h_1, h_2)_g:=\sum_{i, j}h_1(e_i, e_j) h_2(e_i, e_j)\,.
\]
On the other hand, for the square norm of the 3-form $T$ with respect to $g$ we fix the normalization 
 \begin{equation}\label{leng}
\|T\|^{2}_{g}:=\frac{1}{3!}\sum_{i, j}g(T(e_{i}, e_{j}), T(e_{i}, e_{j}))\,.
\end{equation}

\begin{theorem}\label{new1}
Let $(M^{n}, g, T)$ be a compact $\nabla$-Einstein manifold with skew-torsion, i.e. $\Ric_{S}^{\nabla}=\frac{1}{n}\Scal^{\nabla}g$.  Assume that   $\Scal^{\nabla}$  is constant.   Then, the first variation of the volume  ${\rm Vol}_{g(t)}(M)$ with respect to the curve $g(t)\in\cal{M}$ in the direction of $h\in \cal{S}^{2}(M)$, satisfies the relation
\begin{equation}\label{lnew}
-\frac{2\Scal^{\nabla}}{n} \frac{d}{dt}\Big|_{t=0}{\rm Vol}_{g(t)}(M)=\int_{M}\frac{d}{dt}\Big|_{t=0}\Scal^{\nabla}_{g(t)} \ dV_{g},
\end{equation}
where  $\Scal^{\nabla}_{g(t)}:=\Scal^{g(t)}-\frac{3}{2}\|T\|^{2}_{g(t)}$ such that $\Scal^{\nabla}_{g(0)}\equiv\Scal^{\nabla}_{g}=:\Scal^{\nabla}$.
 \end{theorem}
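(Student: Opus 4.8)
The plan is to mimic Ville's original argument, integrating the first variation of the $\nabla$-scalar curvature and comparing it against the first variation of the volume. Recall the classical variational formulas from Besse: for a symmetric $2$-tensor $h$ with trace $\tr_g h = (h,g)_g$, one has
\[
\Scal_{g}'(h) = -\Delta_g(\tr_g h) + \Div_g\Div_g h - (\Ric^g, h)_g\,,
\]
and for the volume element $\frac{d}{dt}\big|_{t=0} dV_{g(t)} = \tfrac12 (\tr_g h)\, dV_g$. Since $M$ is compact and $\Delta_g(\tr_g h)$ and $\Div_g\Div_g h$ are divergences (total derivatives), their integrals over $M$ vanish, so $\int_M \Scal_g'(h)\, dV_g = -\int_M (\Ric^g, h)_g\, dV_g$.

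The second ingredient is the variation of $\|T\|^2_{g(t)}$. Here it is essential that $T$ is a \emph{fixed} $3$-form, independent of $t$: only the metric used to take the norm varies. Writing $\|T\|^2_g = \tfrac{1}{3!}\sum T(e_i,e_j,e_k)T(e_i,e_j,e_k)$ in terms of the inverse metric raised on each of the three slots, the first variation is governed by the tensor $S$ from \eqref{S}: one gets
\[
\frac{d}{dt}\Big|_{t=0}\|T\|^2_{g(t)} = -\tfrac12 (S, h)_g\,,
\]
the factor coming from $\tfrac{1}{3!}\cdot 3 = \tfrac12$ and the three symmetric contractions collapsing into the single symmetric tensor $S(X,Y)=\sum_i g(T(e_i,X),T(e_i,Y))$ (the sign is that of $\frac{d}{dt}g^{-1} = -h$ with indices raised). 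Therefore
\[
\frac{d}{dt}\Big|_{t=0}\Scal^{\nabla}_{g(t)} = \Scal_g'(h) + \tfrac{3}{4}(S,h)_g\,,
\]
and integrating,
\[
\int_M \frac{d}{dt}\Big|_{t=0}\Scal^{\nabla}_{g(t)}\, dV_g = -\int_M \big(\Ric^g - \tfrac{3}{4}S,\, h\big)_g\, dV_g\,.
\]

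Now I would bring in the $\nabla$-Einstein hypothesis. From the excerpt, $\Ric^{\nabla}_S = \Ric^g - \tfrac14 S$, so $\Ric^g - \tfrac34 S = \Ric^{\nabla}_S - \tfrac12 S$. Hmm — this does not immediately match; the clean route is rather to observe that equation \eqref{skein} gives $\Ric^g - \tfrac14 S = \tfrac{\Scal^{\nabla}}{n} g$, hence $\Ric^g = \tfrac{\Scal^{\nabla}}{n}g + \tfrac14 S$ and
\[
\Ric^g - \tfrac34 S = \tfrac{\Scal^{\nabla}}{n} g - \tfrac12 S\,.
\]
So I will need to re-examine the variation of $\|T\|^2$: I expect the correct bookkeeping to yield $\int_M \frac{d}{dt}\big|_{t=0}\Scal^{\nabla}_{g(t)}\,dV_g = -\tfrac{\Scal^{\nabla}}{n}\int_M \tr_g h\, dV_g$ once all the $S$-terms cancel, and the most likely place the cancellation happens is that the variation of $\|T\|^2_g$ contributes exactly $+\tfrac14(S,h)_g$ (not $+\tfrac34$) after the $-\tfrac32$ prefactor, leaving $\Ric^g - \tfrac14 S = \tfrac{\Scal^\nabla}{n}g$ to appear directly. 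I would pin the numerical constant down carefully in coordinates at a point using a frame diagonalizing $h$. Granting this, since $\int_M \tr_g h\, dV_g = 2\int_M \frac{d}{dt}\big|_{t=0} dV_{g(t)} = 2\frac{d}{dt}\big|_{t=0}\mathrm{Vol}_{g(t)}(M)$, we obtain
\[
\int_M \frac{d}{dt}\Big|_{t=0}\Scal^{\nabla}_{g(t)}\, dV_g = -\frac{2\Scal^{\nabla}}{n}\frac{d}{dt}\Big|_{t=0}\mathrm{Vol}_{g(t)}(M)\,,
\]
which is exactly \eqref{lnew}.

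The constancy of $\Scal^{\nabla}$ enters when we pull it out of the integral $\int_M \tfrac{\Scal^{\nabla}}{n}\tr_g h\, dV_g = \tfrac{\Scal^{\nabla}}{n}\int_M \tr_g h\, dV_g$; without it the identity would only hold in the integrated-against-$\Scal^\nabla$ form. The main obstacle I anticipate is precisely the careful determination of the coefficient in the first variation of $\|T\|^2_{g}$ with the normalization \eqref{leng}, and checking that the $S$-contribution combines with the Riemannian term to reproduce exactly the $\nabla$-Einstein left-hand side $\Ric^{\nabla}_S = \tfrac{\Scal^\nabla}{n}g$; the rest is the standard compactness/integration-by-parts argument of Ville, where the $\Delta$ and $\Div\Div$ terms drop out.
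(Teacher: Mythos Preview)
Your approach is exactly the paper's: integrate the variation of $\Scal^{g}$, drop the divergence terms by compactness, invoke the $\nabla$-Einstein condition $\Ric^{g}=\tfrac{\Scal^{\nabla}}{n}g+\tfrac14 S$, and match against the volume variation. The only loose end is the one you flagged yourself, the coefficient in $\frac{d}{dt}\big|_{t=0}\|T\|^{2}_{g(t)}$.

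Your first guess $-\tfrac12(S,h)_g$ is what you get if you regard $T$ as a \emph{fixed covariant $3$-form}, so that three copies of $g^{-1}$ are varied. The paper (formula~(\ref{dleng}), reproved in the Appendix) instead obtains
\[
\frac{d}{dt}\Big|_{t=0}\|T\|^{2}_{g(t)}=-\tfrac16(S,h)_g\,,
\]
which is precisely the value you reverse-engineered to make the $S$-terms cancel. The discrepancy is a matter of convention: in the paper $T$ is held fixed as the \emph{vector-valued} $2$-form (the torsion of $\nabla$, a $(1,2)$-tensor), so in $\|T\|^2_{g}=\tfrac1{6}\sum_{i,j}g\bigl(T(e_i,e_j),T(e_i,e_j)\bigr)$ only the outer $g$ and the two frame arguments $e_i,e_j$ vary, giving $\tfrac16(S,h)_g-\tfrac16(S,h)_g-\tfrac16(S,h)_g=-\tfrac16(S,h)_g$. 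With that in hand your computation closes: $-\tfrac32\cdot(-\tfrac16)=\tfrac14$, so $\Ric^g-\tfrac14 S=\Ric^{\nabla}_S=\tfrac{\Scal^\nabla}{n}g$ appears on the nose and the constancy of $\Scal^{\nabla}$ lets you pull it through the integral, yielding (\ref{lnew}). So the proposal is correct once this convention is clarified; there is no further gap.
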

\begin{proof}
Set ${\rm Vol}_{g(t)}(M)=\int_{M}dV_{g(t)}$.  The  first variation of the volume element $dV_{g}$  is given by (see for example \cite{Bar, Ku})
\[
\frac{d}{dt}\Big|_{t=0}dV_{g(t)}=\frac{1}{2}(g, h)_{g}dV_{g}=\frac{1}{2}(\tr_{g}h) \ dV_{g}\,,
\]
 thus   
\begin{equation}\label{volmm}
 \frac{d}{dt}\Big|_{t=0}{\rm Vol}_{g(t)}(M)=\frac{1}{2}\int_{M}\tr_{g}h \ dV_{g}.
\end{equation}
Since $(M, g, T)$ is $\nabla$-Einstein, it follows that
\[
(\Ric_{S}^{\nabla}, h)_{g}=\frac{\Scal^{\nabla}}{n}\tr_{g}h
\]
which is equivalent with the relation
\begin{equation}\label{form1}
(\Ric^{g}, h)_{g}=\frac{\Scal^{\nabla}}{n}\tr_{g}h+\frac{1}{4}(S, h)_{g}.
\end{equation}
Now we  need the first variation of the square norm   of the torsion form in the direction of some $h\in\cal{S}^{2}(M)$. This  is given by (see  \cite{AFer} and see also our  Appendix  for an alternative proof)
\begin{equation}\label{dleng}
\frac{d}{dt}\Big|_{t=0}\|T\|^{2}_{g(t)}=-\frac{1}{6}(S, h)_{g}\,.
\end{equation}
Consequently,  %(\ref{dleng}) we also have
\begin{equation}\label{intimp}
\int_{M}(S, h)_{g} \ dV_{g}=-6\int_{M}\frac{d}{dt}\Big|_{t=0}\|T\|^{2}_{g(t)} \ dV_{g}\,.
\end{equation}
Finally, let us recall the  first variation of $\Scal^{g}$   (see \cite{Bar, Ku})
 \[
\frac{d}{dt}\Big|_{t=0}\Scal^{g(t)}=\Delta_{g}(\tr_{g}h)+\Div_{g}(\Div_{g}h)-(\Ric^{g}, h)_{g}\,,
\]
where we write $\Div_{g} : \cal{S}^{p}(M)\to \cal{S}^{p-1}(M)$ for   the divergence of a symmetric  tensor field $F$ on $M$,  i.e.  $\Div_{g}(F)(X_1, \ldots, X_{p-1}):=-\sum_{i=1}^{n}(\nabla^{g}_{e_{i}}F)(e_{i}, X_1, \ldots, X_{p-1})$.
Thus, and since $M$ is compact, by  integrating   one computes
\begin{eqnarray*}
\int_{M}\frac{d}{dt}\Big|_{t=0}\Scal^{g(t)} \ dV_{g}&=&-\int_{M} (\Ric_{g}, h)_{g} \ dV_{g}\\
&\overset{(\ref{form1})}{=}&-\int_{M}\frac{\Scal^{\nabla}}{n}\tr_{g}h  \ dV_{g}-\int_{M}\frac{1}{4}(S, h)_{g} \ dV_{g}\\
&\overset{(\ref{volmm}), (\ref{intimp})}{=}&-\frac{2\Scal^{\nabla}}{n} \frac{d}{dt}\Big|_{t=0}{\rm Vol}_{g(t)}(M)+\frac{3}{2}\int_{M}\frac{d}{dt}\Big|_{t=0}\|T\|^{2}_{g(t)} \ dV_{g}\,,
\end{eqnarray*}
%Here we used that  $M$ is compact and  that $\Scal^{\nabla}$ is constant. 
or  equivalently %Equivalently, this relation  can be written as
\[
\int_{M}\frac{d}{dt}\Big|_{t=0}\Big(\Scal^{g(t)}-\frac{3}{2}\|T\|^{2}_{g(t)} \Big) \ dV_{g}=-\frac{2\Scal^{\nabla}}{n} \frac{d}{dt}\Big|_{t=0}{\rm Vol}_{g(t)}(M)\,.
\]
This proves our assertion.
\end{proof}
\begin{remark}
\textnormal{It should be clear that the right hand side of (\ref{lnew}) does not coincide with the variation of the generalized total scalar curvature functional $\cal{L}$ (in the direction of $h$). This only happens when the first variation of the volume element is zero, $\frac{d}{dt}\Big|_{t=0}dV_{g(t)}=0$.}
\end{remark}
As an immediate corollary of Theorem \ref{new1} we get that %corollary. % reads now as in the Riemannian torsion-free case:
\begin{corol}\label{color1}
Let  $(M^{n}, g, T)$ be  a compact $\nabla$-Einstein manifold with constant $\Scal^{\nabla}$.  If \[
\frac{d}{dt}\Big|_{t=0}{\rm Vol}_{g(t)}(M)=0 \, ,
\] or if $\Scal^{\nabla}=0$, then the differential $(\Scal^{\nabla}_{g})'$   cannot have a constant sign  (unless it is identically zero). \end{corol}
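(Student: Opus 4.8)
The plan is to establish the identity \eqref{lnew} by collecting three standard first-variation formulas from Riemannian geometry and then feeding in the $\nabla$-Einstein condition. First I would record the variation of the volume element, $\frac{d}{dt}\big|_{t=0}dV_{g(t)}=\frac{1}{2}(\tr_{g}h)\,dV_{g}$, which upon integration over the compact $M$ gives $\frac{d}{dt}\big|_{t=0}{\rm Vol}_{g(t)}(M)=\frac12\int_M \tr_g h\,dV_g$. Next, I would use the defining equation $\Ric_{S}^{\nabla}=\frac{\Scal^{\nabla}}{n}g$, paired against $h$ via the $g$-inner product on $\cal{S}^2(M)$, to obtain $(\Ric_{S}^{\nabla},h)_g=\frac{\Scal^{\nabla}}{n}\tr_g h$; expanding $\Ric_S^{\nabla}=\Ric^g-\frac14 S$ turns this into $(\Ric^g,h)_g=\frac{\Scal^{\nabla}}{n}\tr_g h+\frac14(S,h)_g$, which is the key bookkeeping relation \eqref{form1}.

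The third ingredient is the first variation of $\|T\|^2_{g(t)}$, which equals $-\frac16(S,h)_g$ (as recorded in the excerpt from \cite{AFer}, with an independent derivation in the Appendix); integrating gives $\int_M (S,h)_g\,dV_g=-6\int_M \frac{d}{dt}\big|_{t=0}\|T\|^2_{g(t)}\,dV_g$. With these in hand, I would invoke the classical formula $\frac{d}{dt}\big|_{t=0}\Scal^{g(t)}=\Delta_g(\tr_g h)+\Div_g(\Div_g h)-(\Ric^g,h)_g$ and integrate over the compact manifold: the $\Delta_g$ and $\Div_g\Div_g$ terms are total divergences and vanish, leaving $\int_M \frac{d}{dt}\big|_{t=0}\Scal^{g(t)}\,dV_g=-\int_M(\Ric^g,h)_g\,dV_g$. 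Substituting \eqref{form1}, then the volume and torsion-norm relations, converts the right-hand side into $-\frac{2\Scal^{\nabla}}{n}\frac{d}{dt}\big|_{t=0}{\rm Vol}_{g(t)}(M)+\frac32\int_M\frac{d}{dt}\big|_{t=0}\|T\|^2_{g(t)}\,dV_g$, and moving the last term to the left gives exactly \eqref{lnew} after recognizing $\Scal^{\nabla}_{g(t)}=\Scal^{g(t)}-\frac32\|T\|^2_{g(t)}$.

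The one place where the hypothesis that $\Scal^{\nabla}$ is constant is genuinely used — and hence the only subtle point — is in pulling $\frac{\Scal^{\nabla}}{n}$ outside the integral $\int_M \frac{\Scal^{\nabla}}{n}\tr_g h\,dV_g$ so that it becomes $\frac{2\Scal^{\nabla}}{n}\frac{d}{dt}\big|_{t=0}{\rm Vol}_{g(t)}(M)$ via \eqref{volmm}. Without constancy of $\Scal^{\nabla}$ the scalar would remain trapped inside the integral and the clean formula would fail; this reflects the remark in the introduction that $\nabla$-Einstein manifolds with skew-torsion need not have constant $\Scal^{\nabla}$, unlike ordinary Einstein manifolds. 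For the corollary, I would simply read off \eqref{lnew}: if either $\frac{d}{dt}\big|_{t=0}{\rm Vol}_{g(t)}(M)=0$ for the given $h$, or $\Scal^{\nabla}=0$, then the left side vanishes, forcing $\int_M (\Scal^{\nabla}_g)'(h)\,dV_g=0$; since this holds for the variation in question and a sign-definite integrand with zero integral over a connected manifold must vanish identically, $(\Scal^{\nabla}_g)'$ cannot be of constant sign unless it is identically zero. The main obstacle is not conceptual but careful sign- and normalization-tracking — in particular matching the factor conventions in \eqref{leng} and \eqref{S} against the formulas quoted from \cite{Bar,Ku,AFer} — so that the coefficient $-\frac{2\Scal^{\nabla}}{n}$ and the cancellation of the $\frac32\|T\|^2$ terms come out consistently.
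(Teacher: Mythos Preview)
Your proposal is correct and follows essentially the same route as the paper: you reproduce the proof of Theorem~\ref{new1} step for step (variation of $dV_g$, the $\nabla$-Einstein pairing identity \eqref{form1}, the torsion-norm variation \eqref{dleng}, and the integrated scalar-curvature variation with the divergence terms dropped), and then read off the corollary from \eqref{lnew} exactly as intended --- the paper itself offers no separate argument for Corollary~\ref{color1} beyond calling it immediate. Your explicit remark on where the constancy of $\Scal^{\nabla}$ enters is a useful addition that the paper leaves implicit.
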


Theorem \ref{new1} shows that \cite[Prop. 1.188]{Bes} holds more general for any compact Riemannian manifold which is $\nabla$-Einstein with respect to a metric connection with {\it parallel skew-torsion}. Indeed, when $\nabla T=0$,  then we can show that the associated  Einstein tensor
\[
G^{\nabla}:=-\Ric_{S}^{\nabla}+\frac{1}{2}\Scal^{\nabla} g=G^{g}+\frac{1}{4}S-\frac{3}{4}\|T\|^{2}_{g}g
\]
is divergence free, and  as in the classical case  for $n\geq 3$ this yields  the constancy of  $\Scal^{\nabla}$ (see also \cite{AFer}). Hence we deduce
\begin{corol}
  Theorem \ref{new1} applies on any compact $\nabla$-Einstein manifold $(M^{n}, g, T)$ $(n\geq 3)$ with $\nabla T=0$.
    \end{corol}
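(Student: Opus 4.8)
The plan is to show that the hypothesis $\nabla T=0$ already forces $\Scal^{\nabla}$ to be constant, so that the single standing assumption of Theorem \ref{new1} is automatically satisfied and the conclusion follows verbatim. The starting point is the decomposition of the $\nabla$-Einstein tensor into its Levi-Civita counterpart: from $\Ric_{S}^{\nabla}=\Ric^{g}-\tfrac14 S$ and $\Scal^{\nabla}=\Scal^{g}-\tfrac32\|T\|_{g}^{2}$ one obtains
\[
G^{\nabla}:=-\Ric_{S}^{\nabla}+\tfrac12\Scal^{\nabla}g=G^{g}+\tfrac14 S-\tfrac34\|T\|_{g}^{2}g,
\]
where $G^{g}=-\Ric^{g}+\tfrac12\Scal^{g}g$. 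Since $\Div_{g}G^{g}=0$ by the contracted second Bianchi identity, proving $\Div_{g}G^{\nabla}=0$ reduces to proving $\Div_{g}\big(\tfrac14 S-\tfrac34\|T\|_{g}^{2}g\big)=0$.

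Here I would exploit two elementary consequences of $\nabla T=0$. First, because $\nabla$ is a metric connection, $\|T\|_{g}^{2}$ is $\nabla$-parallel, hence constant, so that $\Div_{g}(\|T\|_{g}^{2}g)=-d\|T\|_{g}^{2}=0$. Second, because $S$ is obtained from $T\otimes T$ by contractions with the metric, $\nabla g=0$ together with $\nabla T=0$ forces $\nabla S=0$. Writing $\nabla=\nabla^{g}+\tfrac12 T$ and re-expressing $(\nabla^{g}_{X}S)(Y,Z)$ through $(\nabla_{X}S)(Y,Z)$, the vanishing $\nabla S=0$ and the identity $T(e_{i},e_{i})=0$ leave in $\Div_{g}S$ only a term of the shape $\sum_{i,k}T(e_{i},\cdot,e_{k})\,S(e_{i},e_{k})$; this vanishes because $T(e_{i},\cdot,e_{k})$ is antisymmetric in the pair $(i,k)$ while $S(e_{i},e_{k})$ is symmetric. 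Hence $\Div_{g}S=0$ and $\Div_{g}G^{\nabla}=0$. (Alternatively, the divergence-freeness of $G^{\nabla}$ may be quoted directly from \cite{AFer}.)

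Finally, the $\nabla$-Einstein condition $\Ric_{S}^{\nabla}=\tfrac1n\Scal^{\nabla}g$ gives $G^{\nabla}=\tfrac{n-2}{2n}\Scal^{\nabla}g$; applying the divergence and using $\Div_{g}(fg)=-df$ yields $\tfrac{n-2}{2n}\,d\Scal^{\nabla}=0$. For $n\geq 3$ this forces $d\Scal^{\nabla}=0$, so $\Scal^{\nabla}$ is constant on the connected manifold $M$, and Theorem \ref{new1} applies. The main technical obstacle is precisely the verification that $\Div_{g}S=0$: it requires carefully converting Levi-Civita covariant derivatives into $\nabla$-derivatives, and the final cancellation hinges on the interplay between the skew-symmetry of $T$ and the symmetry of $S$.
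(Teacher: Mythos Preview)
Your proposal is correct and follows essentially the same route as the paper: show that $\nabla T=0$ makes $G^{\nabla}$ divergence free, then use the $\nabla$-Einstein condition and $n\geq 3$ to conclude that $\Scal^{\nabla}$ is constant, so Theorem~\ref{new1} applies. The only difference is that the paper merely asserts $\Div_{g}G^{\nabla}=0$ (with a reference to \cite{AFer}), whereas you actually carry out the verification that $\Div_{g}S=0$ via the symmetry/antisymmetry cancellation; this added detail is correct and welcome.
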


\medskip
 \noindent {\bf Acknowledgements:}  The author acknowledges    Czech Science Foundation  (GA\v{C}R) for support via the  program GA\v{C}R 19-14466Y.% He also thanks Anton Galaev and  Yaroslav  Bazaikin for useful conversations. % and the Faculty of Science, University of Hradec Kr\'alov\'e for hospitality. 
 %example assuming 

\section{Appendix}
 
Here we present a proof of the  formula (\ref{dleng}), in a  slightly different way than the one discussed  in \cite{AFer}.  

Recall that given any two Riemannian metrics $a, b\in\cal{M}$,  there is   a unique self-adjoint positive definite  endomorphism $B_{a, b}$ on  the tangent space $T_{x}M$ $(x\in M)$, such that 
\[
b(u, v)=a(B_{a, b}u, v),
\]
 for all $u, v\in T_{x}M$.
Hence $d^{a}_{b}:=B_{a, b}^{-1/2}$ is an isometry on $T_{x}M$ and this extends to an isometry on whole tangent bundle $TM$, which we  denote by the same letter,  $d^{a}_{b} : T^{a}M\equiv TM\to T^{b}M\equiv TM$,  $a(X, Y)=b(d^{a}_{b}X, d^{a}_{b}Y)$ for any $X, Y\in\Gamma(TM)$.
 Consider the variation $g(t)\equiv g_{t}=g+th$ of $g$ in the direction of   some symmetric $(2, 0)$-tensor $h$ on $M$. Since there exists  an open neighborhood $I=(-\epsilon, \epsilon)\subset\bb{R}$ of $0\in\bb{R}$  such that $g_{t}\in\cal{M}$ for any $t\in I$, one has  $g(G^{t}_{g, g_{t}}u, w)=g_{t}(u, w)$, 
  for any $u, w\in T_{x}M$, where   $G^{t}_{g, g_{t}}\in\Ed(T_{x}M)$ is the self-adjoint positive definite endomorphism corresponding to $g_{t}$.  Then, by the definition of $g_t$ it follows that 
  \[
  g(G^{t}_{g, g_{t}}u, w)=g_{t}(u, w)=g(u, w)+th(u, w)=g(u, w)+tg(H_{g, h}u, w)=g\big((\Id+tH_{g, h})u, w\big),
  \]
i.e.  $G^{t}_{g, g_{t}}=\Id+tH_{g, h}$ and  hence $d^{g}_{g_t}=(G^{t}_{g, g_t})^{-1/2}=(\Id+tH_{g, h})^{-1/2}$.\footnote{Note that any arbitrary element $h\in\cal{S}^{2}(M)$ induces some endomorphism $H_{g, h}\in\Ed(T_{x}M)$ defined by   $h(u, v)=g(H_{g, h}u, v)$, see for example \cite[p.~134]{FKim}.} Consequently
 \begin{equation}\label{eq1}
\frac{d}{dt}\Big|_{t=0}d^{g}_{g_t}=-\frac{1}{2}(\Id+tH_{g, h})^{-3/2}H_{g, h}\Big|_{t=0}=-\frac{1}{2}H_{g, h}\, .
\end{equation}
Consider now some $g$-orthonormal frame $\{e_{i}\}$  of $M$. Then, the set 
\[\{e_{i}(t):=d^{g}_{g_{t}}(e_{i}) : 1\leq i\leq  n\}
\]
forms a $g(t)$-orthonormal frame such that $e_{i}(0)=d^{g}_{g}(e_i)=\Id(e_i)=e_i$, for any $i$. 
Since $h(e_i, e_j)=g(H_{g, h}e_i, e_j)$ we obtain
\[
H_{g, h}e_i=\sum_{j}g(H_{g, h}e_i, e_j)e_j=\sum_{j}h(e_i, e_j)e_j
\]
and hence (\ref{eq1}) yields the formula
\begin{equation}\label{eq2}
\frac{d}{dt}\Big|_{t=0}d^{g}_{g_t}(e_i)=\frac{d}{dt}\Big|_{t=0}e_{i}(t):=\dot{e}_i(t)\Big|_{t=0}=\dot{e}_{i}(0)=-\frac{1}{2}H_{g, h}e_i=-\frac{1}{2}\sum_{j}h(e_i, e_j)e_j\, .
\end{equation}
Now, by (\ref{leng}) we see that
{\small  \begin{eqnarray*}
 \frac{d}{dt}\|T\|^{2}_{g(t)}&=&\frac{1}{6}\sum_{i, j}\frac{d}{dt}\Big[(g+th)\Big(T\big(e_{i}(t), e_j(t)\big), T\big(e_{i}(t), e_j(t)\big)\Big)\Big]\\
 &=&\frac{1}{6}\sum_{i, j}\dot{g}(t)\Big(T\big(e_{i}(t), e_{j}(t)\big), T\big(e_{i}(t), e_{j}(t)\big)\Big)+\frac{1}{3}\sum_{i, j}g(t)\Big(\frac{d}{dt}T\big(e_{i}(t), e_{j}(t)\big), T\big(e_{i}(t), e_{j}(t)\big)\Big)\\
 &=&\frac{1}{6}\sum_{i, j}\dot{g}(t)\Big(\sum_{k}g\big(T(e_{i}(t), e_{j}(t)), e_{k}(t)\big)e_{k}(t), \sum_{\ell}g\big(T(e_{i}(t), e_{j}(t)), e_{\ell}(t)\big)e_{\ell}(t)\Big)\\
 &&+\frac{1}{3}\sum_{i, j}g(t)\Big(T\big(\dot{e}_{i}(t), e_{j}(t)\big)+T\big(e_{i}(t), \dot{e}_{j}(t)\big), T\big(e_{i}(t), e_{j}(t)\big)\Big)\\
 &=&\frac{1}{6}\sum_{i, j,k, \ell}T\big(e_{i}(t), e_{j}(t), e_{k}(t)\big)\cdot T\big(e_{i}(t), e_{j}(t), e_{\ell}(t)\big)\cdot \dot{g}(t)\big(e_{k}(t), e_{\ell}(t)\big)\\
 &&+\frac{1}{3}\sum_{i, j}g(t)\Big(T\big(\dot{e}_{i}(t), e_{j}(t)\big)+T\big(e_{i}(t), \dot{e}_{j}(t)\big), T\big(e_{i}(t), e_{j}(t)\big)\Big).
 \end{eqnarray*}}
Therefore, due to (\ref{eq2}) and the  tensor $S$ defined in (\ref{S}), the evaluation of the previous relation at $t=0$ gives the result:
{\small\begin{eqnarray*}
 \frac{d}{dt}\Big|_{t=0}\|T\|^{2}_{g(t)}&=&\frac{1}{6}\sum_{k, \ell}S(e_{k}, e_{\ell})h(e_{k}, e_{\ell})-\frac{1}{6}\sum_{i, j}g\big(T(H_{g, h}e_{i}, e_{j}), T(e_{i}, e_{j})\big)-\frac{1}{6}\sum_{i, j}g\big(T(e_{i}, H_{g, h}e_{j}), T(e_i, e_j)\big)\\
 &=&\frac{1}{6}(S, h)_{g}-\frac{1}{6}\sum_{i}S(H_{g, h}e_{i}, e_{i})-\frac{1}{6}\sum_{j}S(H_{g, h}e_{j}, e_{j})=\frac{1}{6}(S, h)_{g}-\frac{1}{3}\sum_{i}S(H_{g, h}e_{i}, e_{i})         \\
 &=&\frac{1}{6}(S, h)_{g}-\frac{1}{3}\sum_{i}S(\sum_{j}h(e_{i}, e_{j})e_{j}, e_{i})=\frac{1}{6}(S, h)_{g}-\frac{1}{3}\sum_{i, j}h(e_i, e_j)S(e_i, e_j)\\
 &=&\frac{1}{6}(S, h)_{g}-\frac{1}{3}(S, h)_{g}=-\frac{1}{6}(S, h)_{g} \,.
\end{eqnarray*}}


\begin{thebibliography}{90}



\bibitem{AFer}
 I.~Agricola,  A.~C.~Ferreira,
\newblock  {\it Einstein manifolds with skew torsion}, 
\newblock Oxford Quart. J. $\bold{65}$,  (2014), 717--741.


\bibitem{AF}
I.~Agricola, Th.~Friedrich,
\newblock   {\it On the holonomy of connections with skew-symmetric torsion}, 
\newblock Math. Ann. $\bold{328}$, (2004), 711--748. 


\bibitem{ABK}
I.~ Agricola, J.~ Becker-Bender, H.~ Kim,
\newblock   {\it Twistorial eigenvalue estimates for generalized Dirac operators with torsion}, 
\newblock Adv. Math. $\bold{243}$, (2013), 296--329.




 \bibitem{Bar}
B.~Ammann, C.~B\"ar,
\newblock  {\it The Einstein-Hilbert action as a spectral action}, 
\newblock Noncommutative Geometry and the Standard Model of Elementary Particle Physics, (2002),  75--108.

\bibitem{Bes}  
A.~L. Besse,
\newblock {Einstein manifolds},
\newblock Volume~10 of {\it Ergebnisse der Mathematik und ihrer Grenzgebiete}, Springer-Verlag, Berlin, 1987.


 \bibitem{Chrys}
I.~Chrysikos,
\newblock {\it Invariant connections with skew-torsion and $\nabla$-Einstein manifolds},
\newblock J. Lie Theory $\bold{26}$, (2016), 11--48.


 \bibitem{Chrys1}
I.~Chrysikos,
\newblock {\it Killing and twistor spinors with torsion},
\newblock Ann. Glob.  Anal.  Geom. $\bold{49}$, (2016), 105--141.


 \bibitem{Chrys2}
I.~Chrysikos,
\newblock {\it A new $\frac{1}{2}$-Ricci type formula on the spinor bundle and applications},
\newblock  Adv. Appl. Clifford Algebras $\bold{27}$,  (2017),  3097--3127.

  \bibitem{Chrys3}
I.~Chrysikos, C.~O'Cadiz Gustad, H.~Winther,
\newblock  {\it Invariant connections and $\nabla$-Einstein structures on isotropy irreducible spaces}, 
\newblock J. Geom. Phys. S0393-0440 (2018), 30602-8.  

 \bibitem{Draper}
C.~A.~Draper, A.~Garvin, F.~J.~Palomo,
\newblock {\it Invariant affine connections on odd dimensional spheres}, 
\newblock Ann. Glob. Anal. Geom. $\bold{49}$, (2016), 213--251.


 \bibitem{Draper2}
C.~A.~Draper, A.~Garvin, F.~J.~Palomo,
\newblock  {\it Einstein connections with skew-torsion on Berger spheres}, 
\newblock  J. Geom. Phys. $\bold{134}$,  (2017).

 \bibitem{FrIv}
Th.~Friedrich, S.~ Ivanov, 
\newblock {\it Parallel spinors and connections with skew-symmetric torsion in string theory}, 
\newblock Asian J. Math. $\bold{6}$, (2002), no. 2, 303--335. 
 
  \bibitem{FKim}
Th.~Friedrich, E.~C.~Kim,
\newblock {\it The Einstein-Dirac equation on Riemannian spin manifolds}, 
\newblock J. Geom. Phys. $\bold{33}$, (2000),  128--172.


% \bibitem{IvIv}
% P.~Ivanov, S.~Ivanov,
% \newblock {\it ${\rm SU}(3)$-instantons and ${\rm G}_2$, ${\rm Spin}(7)$-heterotic string solitons},
% \newblock Commun. Math.Phys. $\bold{259}$, (2005), 79--102.
 
  
 \bibitem{Ku}
W.~K\"uhnel, 
\newblock  {\it Differential Geometry, Curves--Surfaces--Manifolds}, 
\newblock  Amer. Math. Soc. Student Math. Library, Vol. 16, 2002.
 

%  \bibitem{Streets}
%J.~Streets, 
%\newblock  {\it Regularity and expanding entropy for connection Ricci flow}, 
%\newblock  J. Geom. Phys. $\bold{58}$, (2008), 900--912.
 
 
  \bibitem{Vil}
 M.~Ville,
\newblock  {\it Sur le volume des vari\'et\'es riemanniennes pinc\'ees}, 
\newblock  Bulletin de la S. M. F. $\bold{115}$, (1987),  127--139.

 
  
 \end{thebibliography}
\end{document}